\theoremstyle{plain}
\newtheorem{thm}{Theorem}[section]
\newtheorem{lem}{Lemma}[section]
\newtheorem{cor}{Corollary}[section]
\newtheorem{conj}{Conjecture}[section]
\theoremstyle{proof}
\numberwithin{equation}{section}
\begin{document} 
	\title[Infinitely many Counter examples of a conjecture of Franu\v{s}i\'c and Jadrijevi\'c]{Infinitely many Counter examples of a conjecture of Franu\v{s}i\'c and Jadrijevi\'c}
	\author{Shubham Gupta}
	\address{S. Gupta @Stat Math Unit, Indian Statistical Institute, 7 SJS Sansanwal Marg, New Delhi 110016, India.}
	\email{shubhamgupta2587@gmail.com}
	\keywords{Diophantine quadruples; Pellian equations; Quadratic fields}
	\subjclass[2020] {11D09; 11R11}
	\date{\today}
	\maketitle
	
	\begin{abstract}
		Let $d$ be a square-free integer such that $d \equiv 15 \pmod{60}$ and the Pell's equation $x^2 - dy^2 = -6$ is solvable in rational integers $x$ and $y$. In this paper, we prove that there exist infinitely many Diophantine quadruples in $\mathbb{Z}[\sqrt{d}]$ with the property $D(n)$ for certain $n$'s. As an application of it,  we 
		`unconditionally' prove the existence of infinitely many rings $\mathbb{Z}[\sqrt{d}]$ for which the conjecture of Franu\v{s}i\'c and Jadrijevi\'c (Conjecture \ref{JadZ2017})  does `not' hold. This conjecture states a relationship between the existence of a Diophantine quadruple in $\mathcal{R}$  with the property $D(n)$ and the representability of $n$ as a difference of two squares in $\mathcal{R}$, where $\mathcal{R}$ is a commutative ring with unity.
		
	\end{abstract}

	\section{Introduction}
	
	Let $n \in \mathbb{Z}$. A set $\{a_1, a_2, \ldots, a_m\}$ of $m$ distinct positive integers is called a \textit{Diophantine $m$-tuple with the property $D(n)$} if it satisfies the following condition: $a_ia_j + n = x_{ij}^2$ for all $1 \leq i < j \leq m$, where $x_{ij} \in \mathbb{Z}$. Diophantine $m$-tuples have a long and rich history dating from the time of  Diophantus of Alexandria. Diophantus was the first who discovered these kinds of sets in rational numbers. For example, the set $\{1/16, 33/16, 17/4, 105/16\}$ has the property $D(1)$. The first Diophantine $4$-tuple with the property $D(1)$, namely $\{1, 3, 8, 120\}$, was discovered by Fermat.
	In 1969, by using the Baker's theory on linear forms in logarithms of algebraic numbers, Baker and Davenport \cite{BD1969} demonstrated that this quadruple cannot be extended to a Diophantine quintuple with the same property.  A `folklore' conjecture states that ``there does not exist any Diophantine $5$-tuple with the property $D(1)$''. In order to prove it, Dujella \cite{DU2004} in 2004 proved the non-existence of a Diophantine sextuple with the property $D(1)$ and also established the existence of at most finitely many Diophantine quintuples with the same property. Finally, in 2019, He, Togb\'e, and Ziegler \cite{HTZ2019} proved the folklore conjecture. 
	
	Now, we look at some established results for Diophantine $m$-tuples with the property $D(-1)$. In 1993, Dujella \cite{DU1993} gave the following conjecture: There is no Diophantine $4$-tuple with the property $D(-1)$. In this direction, Dujella and Fuchs \cite{DF2005} in 2005 demonstrated the non-existence of a  Diophantine $5$-tuple with the property $D(-1)$. In the same paper, they also showed that for any Diophantine quadruple $\{a, b, c, d\}$ with the property $D(-1)$ satisfying $a < b < c < d$,  then $a = 1$. Finally, in 2022, Bonciocat, Cipu, and Mignotte \cite{BCM2020} proved that there is no Diophantine quadruple with the property $D(-1)$. For more information on Diophantine $m$-tuples with the property $D(n)$, see \cite{BR1985, DU1993, BTF2019, DU21, CGH22} and references therein. Readers may also visit the webpage \url{https://web.math.pmf.unizg.hr/~duje/dtuples.html} created by Dujella for a brief survey on Diophantine $m$-tuples. 
	
	Now, let us generalize the definition of a Diophantine $m$-tuple from positive integers to any commutative ring $R$ with unity. Suppose that $n \in  R$. A set $\{a_1, a_2, \ldots, a_m\} \subset R \setminus \{0\}$ is called a \textit{ Diophantine $m$-tuple in $R$ with the property $D(n)$} if $a_ia_j + n = x_{ij}^2$ for all $i \neq j$ and for some $x_{ij} \in R$. In 1993, Dujella \cite{DU1993} demonstrated that there exists a Diophantine quadruple with the property $D(n)$ if and only if $n$ can be represented as a difference of two squares in rational integers, up to finitely many exceptions of $n$. Later on, Dujella \cite{DU1997} showed that this fact is also true for the ring of  Gaussian integers. Furthermore, this fact  holds for some rings of integers, namely: the ring of integers of  $\mathbb{Q}(\sqrt{d})$ for certain $d$'s (see \cite{MR2004, FR2004, FR2008, FR2009, FS2014, SO2013});  the ring of integers of $\mathbb{Q}(\sqrt[3]{2})$ (see \cite{FR2013} and \cite{MA2012}). Motivated by these examples, in 2019, Franu\v{s}i\'c and Jadrijevi\'c  proposed the following conjecture:
	\begin{conj}\cite[Conjecture 1]{FJ2019}\label{JadZ2017}
		Let $\mathcal{R}$ be a commutative ring of unity, and $n \in \mathcal{R} \setminus \{0\}$. A Diophantine quadruple in $\mathcal{R}$ with the property $D(n)$ exists if and only if $n$ is of the form $\alpha^2 - \beta^2$ for some $\alpha, \beta \in \mathcal{R}$, up to finitely many exceptions of $n$. 
	\end{conj}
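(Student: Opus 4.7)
The plan is to refute Conjecture~\ref{JadZ2017} (as the paper's title promises) by producing, in each of infinitely many rings $\mathbb{Z}[\sqrt{d}]$, an infinite family of $n$'s for which the biconditional fails.

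First I would pin down a parity obstruction to being a difference of two squares. Direct expansion gives
\[
(a+b\sqrt{d})^2-(c+e\sqrt{d})^2 = \bigl(a^2-c^2+d(b^2-e^2)\bigr) + 2(ab-ce)\sqrt{d},
\]
so the coefficient of $\sqrt{d}$ in any difference of squares in $\mathbb{Z}[\sqrt{d}]$ is even. Consequently every $n = p + q\sqrt{d}$ with $q$ odd is automatically \emph{not} a difference of two squares, and the task is reduced to producing a Diophantine quadruple with such an $n$. Moreover, if $\{a_1,\dots,a_4\}$ is $D(n)$ and $u\in\mathbb{Z}[\sqrt{d}]^{\times}$, then $\{ua_1,\dots,ua_4\}$ is $D(u^2 n)$; writing $u^2=p'+q'\sqrt{d}$ with $p'^2-dq'^2=1$ forces $p'$ odd and $q'$ even, from which a short parity check shows $u^2 n$ again has odd coefficient of $\sqrt{d}$. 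Since $\mathbb{Z}[\sqrt{d}]^{\times}$ is infinite, a \emph{single} base quadruple propagates along the unit orbit to infinitely many counterexamples in the given ring.

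Second, to produce the base counterexample I would exploit the Pellian hypothesis. Fix a solution $(x_0,y_0)$ of $x^2-dy^2=-6$; then $\nu=x_0+y_0\sqrt d$ has norm $-6$, and the data $d\equiv15\pmod{60}$ (so $d$ is odd and $3,5\mid d$) gives the small primes a predictable behaviour. I would try to write down an explicit quadruple whose entries are combinations of small rationals (say $1,3,8,\dots$) with $\nu$ or $y_0\sqrt{d}$, and verify each of the six identities $a_i a_j + n = \square$ by reducing it either to $x_0^2+6=dy_0^2$ or to an elementary algebraic identity. Including exactly one factor proportional to $y_0\sqrt{d}$ is what forces the resulting $n$ to have odd coefficient of $\sqrt{d}$, matching the obstruction from the first step.

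Third, I would show that infinitely many $d$ satisfy both hypotheses. A parametric family is the cleanest route: choose $X(t),Y(t),d(t)\in\mathbb{Z}[t]$ with $X(t)^2-d(t)Y(t)^2\equiv-6$ identically and $d(t)\equiv15\pmod{60}$ (for example, $Y=1$ and $d(t)=X(t)^2+6$ with $X(t)=3+6k$ restricted to $k\equiv0,4\pmod 5$), and then apply the standard square-free sieve for values of an irreducible polynomial to obtain infinitely many admissible square-free $d$.

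The main obstacle I anticipate is the second step. Generic extensions of Diophantine pairs to quadruples (of Arkin--Hoggatt--Straus type and its variants) tend to yield $n$-values whose $\sqrt{d}$-coefficient is even, hence trivially a difference of squares; extracting a quadruple whose $D(n)$-value has \emph{odd} $\sqrt{d}$-coefficient demands coordinating the choices of base pair and extension elements tightly with the specific Pell solution $(x_0,y_0)$ and with the factor structure of $6$ in $\mathbb{Z}[\sqrt{d}]$. Once that explicit quadruple is in hand, everything else (the unit propagation and the parametric family of $d$'s) is routine.
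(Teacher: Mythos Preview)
Your parity obstruction is correct but leads to a dead end: for $d\equiv 3\pmod 4$ (hence for every $d\equiv 15\pmod{60}$), \emph{no} Diophantine quadruple in $\mathbb{Z}[\sqrt d]$ has property $D(n)$ when the $\sqrt d$-coefficient of $n$ is odd. Since every square $(x+y\sqrt d)^2$ has even $\sqrt d$-coefficient, each product $a_ia_j$ would need odd $\sqrt d$-coefficient. Classify $a_i=p_i+q_i\sqrt d$ by the parity pair $(p_i\bmod 2,\,q_i\bmod 2)$; a direct check shows $p_iq_j+p_jq_i$ is odd only when the two types are distinct and neither is $(\text{even},\text{even})$. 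Four elements would then need four distinct types drawn from the three-element set $\{(\text{even},\text{odd}),(\text{odd},\text{even}),(\text{odd},\text{odd})\}$, which is impossible. This is precisely the Franu\v{s}i\'c result the paper quotes at the start of \S\ref{pre}, so the obstacle you flagged as ``main'' is not merely delicate but insurmountable along this route.

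The paper sidesteps this by choosing $n=(4m+2)+4k\sqrt d$ with $(2m+1)^2-d(2k)^2=1$, i.e.\ $n=2u$ for a unit $u$; the $\sqrt d$-coefficient is $4k$, so your easy obstruction does not apply. That such $n$ is nevertheless not a difference of two squares (Lemma~\ref{diff}) is a genuinely arithmetic fact whose endgame is the unsolvability of $x^2-dy^2=\pm 2$, which is exactly where the condition $5\mid d$ enters. On the existence side, a quadruple with property $D(2)$ is built from the factorization $3\cdot 2=(-1)(-6)$ together with the Pell solution for $-6$ (Theorem~\ref{main_result_2} at $m=k=0$), and then propagated along the unit orbit just as you proposed. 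Your parametric family of admissible $d$'s matches the paper's.
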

	Moreover, in the same paper, they proved that this conjecture is valid for the ring of integers of the  bi-quadratic field $\mathbb{Q}(\sqrt{2}, \sqrt{3})$.
	
	In 2023, the first counter-example of Conjecture \ref{JadZ2017} has been discovered by the author, Chakraborty, and Hoque \cite{CGH2022}. They proved  that by assuming the truth of  Buniakovsky's conjecture \cite{BU1857}, there exist infinitely many quadratic rings of integers for which Conjecture \ref{JadZ2017} does `not' hold. Now, in the present paper, we unconditionally give a family of infinitely many quadratic rings of integers for which  Conjecture  \ref{JadZ2017} does not hold. 
	\subsection*{Main Results} We prove the following theorem regarding the existence of Diophantine quadruples in $\mathbb{Z}[\sqrt{d}]$ with the property $D(n)$ for certain $n$'s, where $d \equiv 15 \pmod{60}$ such that the Pell's equation 
	\begin{equation}\label{eq_norm_-6}
		x^2 - dy^2 = -6
	\end{equation} 
	is solvable in rational integers $x$ and $y$.

	\begin{thm}\label{main_result_2}
		Let $d$ be defined as the above, and let $m, k \in \mathbb{Z}$. There exist  infinitely many Diophantine quadruples in $\mathbb{Z}[\sqrt{d}]$ with the property $D(4m + 2 + 4k\sqrt{d})$ for even $m + k$.
	\end{thm}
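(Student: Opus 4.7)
The strategy is to leverage the element $\pi = x + y\sqrt{d}$ provided by the Pell equation $x^2 - dy^2 = -6$, which satisfies $\pi \cdot (x - y\sqrt{d}) = -6$ and (equivalently) $\pi^2 = 2x\pi + 6$. These identities immediately exhibit $\{\pi,\, x - y\sqrt{d},\, 2x\}$ as a $D(6)$-triple in $\mathbb{Z}[\sqrt{d}]$, with the three pairwise products-plus-$6$ equal to $0^2$, $\pi^2$, and $(x-y\sqrt{d})^2$. The main task is to upgrade this triple into a $D(n)$-quadruple with $n = 4m + 2 + 4k\sqrt{d}$ and then produce infinitely many such quadruples for each fixed admissible $(m,k)$.

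The first step is to write $n = 6 + 4(m - 1 + k\sqrt{d})$, exposing the ``correction term'' $4(m-1+k\sqrt{d})$ that must be absorbed when passing from $D(6)$ to $D(n)$. I would then construct four explicit elements $a_1, a_2, a_3, a_4 \in \mathbb{Z}[\sqrt{d}]$ parametrized polynomially in $x, y, m, k$ and a free integer $t$, chosen so that each pairwise product $a_i a_j$ reduces, via the Pell identity $\pi^2 = 2x\pi + 6$, to an expression of the form $\gamma_{ij}^2 - n$ for some explicit $\gamma_{ij} \in \mathbb{Z}[\sqrt{d}]$; then $a_i a_j + n = \gamma_{ij}^2$ is automatic. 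The six required verifications thereby reduce to polynomial identities in $t, m, k, x, y$ that can be checked by direct expansion modulo $x^2 - dy^2 = -6$.

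The parity hypothesis $m + k$ even is expected to enter the construction as the precise $2$-adic integrality requirement: intermediate coefficients will carry a factor of $2$ in the denominator, and the evenness of $m + k$ is exactly what guarantees these coefficients land in $\mathbb{Z}[\sqrt{d}]$ (for instance, because it is equivalent to $2m+1+2k\sqrt{d}$ and $2m-1+2k\sqrt{d}$ lying in appropriate cosets mod $2\mathbb{Z}[\sqrt{d}]$). This is the natural gatekeeper separating admissible pairs $(m, k)$ from inadmissible ones.

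The principal obstacle is pinning down the correct parametric form of the $a_i(t)$: the ``regular'' Eulerian extension of the triple $\{\pi,\, x - y\sqrt{d},\, 2x\}$ via the classical formula $e = a + b + c + \frac{2}{n}(abc \pm rst)$ collapses to $0$ in both sign choices (as one checks directly from $abc = -12x$ and $rst = 0$), so the required fourth element must come from a genuinely irregular extension. Once the $a_i(t)$ are located and the six identities verified, infinitely many quadruples follow by varying $t$ (equivalently, by replacing $\pi$ with $\pi \cdot \varepsilon^{j}$ where $\varepsilon$ is the fundamental unit of $\mathbb{Z}[\sqrt{d}]$, which yields infinitely many solutions of $x^2 - dy^2 = -6$); distinctness of the resulting quadruples is immediate from the unbounded growth of the norms of their entries, and nonzeroness/pairwise distinctness of the four entries inside each quadruple excludes at most finitely many values of $t$.
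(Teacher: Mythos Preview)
Your plan has a genuine gap at exactly the point you yourself flag: you never produce the four elements. Starting from the $D(6)$-triple $\{\pi,\bar\pi,2x\}$ is correct as stated, but the only concrete extension mechanism you propose is the Eulerian one, and you then observe that it collapses to $0$ in both sign choices. After that the plan reduces to ``construct $a_1,a_2,a_3,a_4$ parametrized polynomially \ldots\ chosen so that each $a_ia_j+n$ is a square,'' which is a restatement of the goal rather than a method. Without an actual candidate family there is nothing to verify, and there is no reason to expect that an ad hoc irregular extension of this particular degenerate triple (one of whose square roots is $0$) exists or can be found by inspection.

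The paper does not proceed via a $D(6)$-triple at all. It uses the fixed polynomial family $\mathcal{A}=\{a,\,b,\,a+b+2r,\,a+4b+4r\}$ from Lemma~2.2, which is a $D(n)$-quadruple as soon as $ab+n=r^2$ and $3n$ admits a factorisation $3n=(a+2r+\alpha)(a+2r-\alpha)$. The Pell solution enters not as a triple to be extended but as a \emph{factorisation of $-6$}: writing $3n=(-1)(-6)(2m+1+2k\sqrt{d})$ and plugging in $-6=(6\alpha+3+(6\beta+1)\sqrt{d})(6\alpha+3-(6\beta+1)\sqrt{d})$ gives explicit $\alpha_1,\alpha_2$ with $\alpha_1\alpha_2=3n$. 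From $\alpha_1+\alpha_2=2(a+2r)$ one reads off $a+2r$; one then \emph{chooses} $a$ to be any unit of the shape $(2a_1,2b_1+1)$ supplied by Lemma~3.3, which forces $r$ and then $b=(r^2-n)/a$. The parity hypothesis $m+k$ even is precisely what makes the resulting $r$ lie in $\mathbb{Z}[\sqrt{d}]$, and $b$ is integral because $a$ is a unit. Infinitude comes from the infinitely many admissible units $a$, with Lemma~2.4 disposing of the finitely many degenerate choices. So the missing ingredient in your plan is not a clever irregular extension of your triple, but rather the switch to the $\{a,b,a+b+2r,a+4b+4r\}$ family together with the observation that factoring $3n$ (not $n$) is what the norm-$(-6)$ element buys you.
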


	As a consequence of the aforementioned theorem, we show  in the last section that there exist infinitely many  $n$'s of the form $4m + 2 +  4k\sqrt{d}$, which cannot be represented as a difference of two squares in $\mathbb{Z}[\sqrt{d}]$. However, for these types of $n$'s, there exists a Diophantine quadruple in $\mathbb{Z}[\sqrt{d}]$ with the property $D(n)$ by using the above theorem.
	In this manner, we conclude that there exist infinitely many rings $\mathbb{Z}[\sqrt{d}]$ for which Conjecture \ref{JadZ2017} does not hold.    
	
	\section{Preliminaries} \label{pre}
	
	Let $ k, m \in \mathbb{Z}$, and let $d$ be a square-free rational integer. Now, we construct two disjoint subsets $S$ and $T$ of $\mathbb{Z}[\sqrt{d}]$ which are as follows:
	\begin{align*}
		S &= \{(2m+ 1 + 2k\sqrt{d}),~(4m + 4k\sqrt{d}),~(4m + (4k + 2)\sqrt{d}), ~(4m + 2 + 4k\sqrt{d})\}, \\ 
		T &= \mathbb{Z}[\sqrt{d}] \setminus S.
	\end{align*}
	
	\begin{lem} \cite[Propositions 5 and 6]{FR2008}
		Let $T$ and $d$ be defined as the above, with $d  \equiv 3 \pmod4$. If $n \in T$, then there does not exist any Diophantine quadruple in $\mathbb{Z}[\sqrt{d}]$ with the property $D(n)$.
	\end{lem}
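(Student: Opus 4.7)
The plan is a residue analysis in $\mathbb{Z}[\sqrt{d}]$ modulo 2 and modulo 4. The underlying observation is that with $d \equiv 3 \pmod 4$ the set of squares in $\mathbb{Z}[\sqrt{d}]/4\mathbb{Z}[\sqrt{d}]$ is very small, which severely restricts the residues of any hypothetical Diophantine quadruple with property $D(n)$.

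First I would compute the squares modulo 2. Writing $\alpha = p + q\sqrt{d}$, one has $\alpha^2 = (p^2 + dq^2) + 2pq\sqrt{d}$; since $d$ is odd, reduction mod 2 gives $\alpha^2 \equiv (p+q)^2 \pmod 2$ with zero $\sqrt{d}$-coefficient. Thus, for a $D(n)$-quadruple $\{a_1,\dots,a_4\}$ with $n = n_0 + n_1\sqrt{d}$, the $\sqrt{d}$-coefficient $p_i q_j + p_j q_i$ of each $a_i a_j$ must be $\equiv n_1 \pmod 2$. If $n_1$ is odd, this forces every pair of vectors $(p_i, q_i) \in \mathbb{F}_2^2$ to be linearly independent --- impossible, since $\mathbb{F}_2^2$ contains only three nonzero vectors. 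This eliminates all residue classes in $T$ with odd $\sqrt{d}$-coefficient.

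Next, assume $n_1$ is even. A direct comparison with the four families defining $S$ shows that, among the eight residues mod 4 having even $\sqrt{d}$-coefficient, all but $2 + 2\sqrt{d}$ already lie in $S$; hence only the single class $n \equiv 2 + 2\sqrt{d} \pmod 4$ remains to be excluded. I would then refine to modulo 4: a parity-by-parity check on $(p, q) \bmod 2$ yields that the squares of $\mathbb{Z}[\sqrt{d}]/4$ are exactly $\{0,\ 1,\ 3,\ 2\sqrt{d}\}$, so each $a_i a_j \bmod 4$ must lie in $\{2,\ 2 + 2\sqrt{d},\ 1 + 2\sqrt{d},\ 3 + 2\sqrt{d}\}$. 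Combined with the mod-2 constraint (the four vectors $(p_i, q_i)$ share a common line in $\mathbb{F}_2^2$), splitting into the three sub-cases according to which line is chosen and writing each $a_i$ in halved coordinates then leads, via pigeonhole on the parities of the halved coordinates, to an impossible requirement (typically ``at least three integers must be pairwise of opposite parity'').

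The hardest part will be the modulo-4 enumeration: three line-subcases, each with further parity sub-cases for the halved coordinates, and in each one the six pairwise conditions $a_i a_j \bmod 4$ must be tracked simultaneously. Conceptually each subcase collapses quickly by pigeonhole, but the bookkeeping of all six products against the short list of admissible residues is the one place requiring careful attention; the rest of the argument (Steps 1 and 2) is routine linear algebra over $\mathbb{F}_2$.
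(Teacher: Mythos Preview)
The paper does not actually prove this lemma; it merely quotes it from Propositions~5 and~6 of Franu\v{s}i\'c (2008). Your residue analysis modulo~$2$ and modulo~$4$ is exactly the argument carried out in that reference, and the outline you give is correct, including the identification of $n\equiv 2+2\sqrt{d}\pmod 4$ as the sole surviving class in $T$ and the pigeonhole collapse of each line-subcase. One small refinement: in the diagonal subcase (all $p_i\equiv q_i\pmod 2$) the contradiction is not literally ``three integers of pairwise opposite parity'' but rather that three residues $(p_i\bmod 4,\,q_i\bmod 4)\in\{1,3\}^2$ would need pairwise Hamming distance~$1$, which is impossible since that graph is a $4$-cycle; with that adjustment your sketch goes through as stated.
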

	
	Now, we look at the existence of Diophantine quadruples in $\mathbb{Z}[\sqrt{d}]$ with the property $D(n)$ for some $n \in S$. To achieve this goal, we use the next two lemmas which can be derived from the definition of a Diophantine $m$-tuple.
	
	\begin{lem}{\cite[Lemma 2.5]{CGH2022}}\label{lem2.1}
		For any $n\in\mathbb{Z}[\sqrt{d}]$, a set $\mathcal{A} = \{a, b, a + b + 2r, a + 4b + 4r\}$ of non-zero and distinct elements forms a  Diophantine quadruple  in $\mathbb{Z}[\sqrt{d}]$ with the property $D(n)$, if
		$ ab + n = r^2$ and $3n = \alpha_1\alpha_2$ with $\alpha_1=a + 2r + \alpha$ and $ \alpha_2= a + 2r - \alpha$
		for some $ a, b, r, \alpha\in \mathbb{Z}[\sqrt{d}]$.
	\end{lem}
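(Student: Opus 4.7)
The plan is to verify that all six pairwise products $a_i a_j + n$, where the $a_i$'s range over the four elements listed in $\mathcal{A}$, are perfect squares in $\mathbb{Z}[\sqrt{d}]$. The first hypothesis $ab + n = r^2$ immediately handles the pair $\{a, b\}$, so the core of the argument is a case-by-case expansion of the remaining five products, using the substitution $ab = r^2 - n$ whenever an $ab$ term appears after expansion.

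For four of these pairs I expect the expansion to collapse to a perfect square whose root is a short $\mathbb{Z}$-linear combination of $a$, $b$, $r$. Specifically, I anticipate identities of the form $a(a+b+2r) + n = (a+r)^2$, $b(a+b+2r) + n = (b+r)^2$, $b(a+4b+4r)+n = (2b+r)^2$, and $(a+b+2r)(a+4b+4r) + n = (a+2b+3r)^2$. Each should follow from direct expansion together with a single application of $ab = r^2 - n$, and I would not spell out the bookkeeping beyond a one-line verification per pair.

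The only non-routine pair, and the reason the statement requires a second hypothesis, is $\{a,\, a+4b+4r\}$. A direct expansion followed by the substitution $ab = r^2 - n$ yields $a(a+4b+4r) + n = (a+2r)^2 - 3n$, which is a square only if $3n$ admits a suitable factorization. The hypothesis $3n = \alpha_1 \alpha_2$ with $\alpha_1 = a + 2r + \alpha$ and $\alpha_2 = a + 2r - \alpha$ unfolds as $3n = (a+2r)^2 - \alpha^2$, and hence $(a+2r)^2 - 3n = \alpha^2$, giving $a(a+4b+4r) + n = \alpha^2$, as required. The main conceptual step is therefore recognising that this particular pair is the only one the shape of $\mathcal{A}$ does not handle automatically; once that is identified, the auxiliary hypothesis is tailor-made to fix it, and the verification is complete.
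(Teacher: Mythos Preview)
Your proposal is correct. The paper does not supply its own proof of this lemma; it is quoted from \cite{CGH2022} with only the remark that it ``can be derived from the definition of a Diophantine $m$-tuple,'' and your six-product verification is exactly that derivation.
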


	\begin{lem}{\cite[Lemma 2.3]{CGH2022}}\label{lemp}
		For a square-free $d\in \mathbb{Z}$, let a set  $\{a_1, a_2, a_3, a_4\}$ be a Diophantine quadruple in  $\mathbb{Z}[\sqrt{d}]$ with the property $D(n)$. Then for any non-zero $w\in \mathbb{Z}[\sqrt{d}]$,  the set $\{wa_1, wa_2, wa_3, wa_4\}$ forms a  Diophantine quadruple in $\mathbb{Z}[\sqrt{d}]$ with the property $D(w^2n)$.
	\end{lem}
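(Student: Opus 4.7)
The plan is to verify the two required properties of a Diophantine quadruple directly from the definition after rescaling by $w$. Since $\{a_1, a_2, a_3, a_4\}$ is a Diophantine quadruple in $\mathbb{Z}[\sqrt{d}]$ with the property $D(n)$, by definition there exist $x_{ij} \in \mathbb{Z}[\sqrt{d}]$, $1 \le i < j \le 4$, satisfying $a_i a_j + n = x_{ij}^2$. Multiplying both sides of each such relation by $w^2$ immediately yields
\[
(wa_i)(wa_j) + w^2 n = (w x_{ij})^2,
\]
and $w x_{ij} \in \mathbb{Z}[\sqrt{d}]$ since $\mathbb{Z}[\sqrt{d}]$ is closed under multiplication. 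This shows that the set $\{wa_1, wa_2, wa_3, wa_4\}$ satisfies the defining equations for property $D(w^2 n)$.

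The only remaining point is to confirm that $\{wa_1, wa_2, wa_3, wa_4\}$ genuinely consists of four non-zero, pairwise distinct elements of $\mathbb{Z}[\sqrt{d}]$. Here I would invoke the fact that, for square-free $d$, the ring $\mathbb{Z}[\sqrt{d}]$ embeds in the field $\mathbb{Q}(\sqrt{d})$ and is therefore an integral domain. In an integral domain, since $w \ne 0$ and each $a_i \ne 0$, we have $w a_i \ne 0$; and $w a_i = w a_j$ forces $a_i = a_j$ after cancellation, contradicting the distinctness of $a_1, a_2, a_3, a_4$. Thus the rescaled set is a bona fide Diophantine quadruple with the asserted property.

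There is no real obstacle in this proof: it is a one-line algebraic manipulation together with a routine check that integral-domain cancellation preserves non-vanishing and distinctness. The lemma is essentially the observation that the property $D(n)$ transforms covariantly under scaling of the quadruple, which will later be combined with Lemma \ref{lem2.1} to produce the infinitely many examples claimed in Theorem \ref{main_result_2}.
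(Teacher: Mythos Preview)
Your proof is correct and is precisely the natural direct verification from the definition. The paper itself does not supply a proof of this lemma but merely cites it from \cite{CGH2022}, so there is no paper-internal argument to compare against; your approach is the standard one.
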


	For examining the condition of non-zero and distinctness of the set $\mathcal{A}$ (defined in Lemma \ref{lem2.1}), we use the following result:
	
	\begin{lem}{\cite[Lemma 2.4]{CGH2022}}\label{lemm1} Suppose that $a_1, a_2, b_1, b_2, c_1, c_2, d_1, d_2, e_1 \in \mathbb{Z}$ satisfying $a_1a_2b_1 \neq 0$. Then
		the following system of simultaneous equations
		\begin{align}\label{eQ7}
			\begin{cases}
				a_1x^2 + b_1y^2 + c_1x + d_1y + e_1 = 0\\
				a_2xy + b_2x + c_2y + d_2 = 0
			\end{cases}
		\end{align}
		has at most finitely many solutions $x$ and $y$ in $\mathbb{Z}$.
	\end{lem}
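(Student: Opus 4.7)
The plan is to eliminate one variable and reduce the system to a univariate polynomial equation whose leading coefficient is provably nonzero, forcing only finitely many integer solutions. I would view the second equation as linear in $x$, rewriting it as
\begin{equation*}
(a_2 y + b_2)\, x \;=\; -(c_2 y + d_2),
\end{equation*}
and then split the analysis into two cases depending on whether the coefficient $a_2 y + b_2$ vanishes.

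In the degenerate case $a_2 y + b_2 = 0$, the hypothesis $a_2 \ne 0$ pins $y$ down to the single rational value $-b_2/a_2$, which yields at most one integer candidate for $y$; substituting into the first equation gives a quadratic in $x$ with leading coefficient $a_1 \ne 0$, hence at most two integer values of $x$. In the generic case $a_2 y + b_2 \ne 0$, I would solve $x = -(c_2 y + d_2)/(a_2 y + b_2)$ and substitute into the first equation. After clearing the denominator $(a_2 y + b_2)^2$, the system collapses to
\begin{equation*}
a_1 (c_2 y + d_2)^2 + b_1 y^2 (a_2 y + b_2)^2 - c_1 (c_2 y + d_2)(a_2 y + b_2) + (d_1 y + e_1)(a_2 y + b_2)^2 \;=\; 0,
\end{equation*}
a polynomial identity in $y$ alone.

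A direct degree count shows that only the term $b_1 y^2 (a_2 y + b_2)^2$ contributes to degree four, yielding leading coefficient $b_1 a_2^{2}$; the other three terms have degree at most three. Since the hypothesis $a_1 a_2 b_1 \ne 0$ forces $b_1 a_2^{2} \ne 0$, the equation is a genuine quartic in $y$ and therefore admits at most four (integer) roots, each determining $x$ uniquely through the substitution formula. Combining the two cases produces an absolute upper bound of six integer pairs $(x,y)$.

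The only genuinely delicate point is confirming that the leading coefficient survives the elimination, since a stray cancellation could turn the quartic into the zero polynomial and destroy the finiteness conclusion; this is the main obstacle in principle, but it is dispatched immediately by the nondegeneracy assumption $a_1 a_2 b_1 \ne 0$, so no heavier machinery (resultants, Thue-type bounds, or Bezout in $\mathbb{P}^2$) is required.
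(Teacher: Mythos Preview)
Your argument is correct. The paper does not supply its own proof of this lemma; it simply quotes the result from \cite[Lemma~2.4]{CGH2022}, so there is no in-paper argument to compare against. Your elimination approach---solving the bilinear equation for $x$ when $a_2 y + b_2 \ne 0$, substituting into the quadric, and tracking the top-degree term---is exactly the natural proof, and your identification of the leading coefficient $b_1 a_2^{2}$ (nonzero by hypothesis) is the key point that prevents the quartic from collapsing. One tiny remark: in the degenerate case $a_2 y + b_2 = 0$ you should also note that the second equation then forces $c_2 y + d_2 = 0$, otherwise there is no solution at all; this does not affect the finiteness count but makes the case split airtight.
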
 
	
	%

	\subsection*{Notations} Let us fix some notations which will be used throughout this paper.
	\begin{itemize}
		\item [] (i)   Let $d$ be a square-free rational integer such that $d \equiv 15 \pmod{60}$ and \eqref{eq_norm_-6} is solvable in rational integers  $x$ and $y$.
		\item [] (ii)  $(a, b) := a + b\sqrt{d} \in \mathbb{Z}[\sqrt{d}]$.
		And  for $k \in \mathbb{Z}$, $k(a, b) = (ka, kb)$. 
		\item  [] (iii) 	Let $\alpha=(a, b)$. We define the norm of an element $\alpha$  by 
		$$\text{Nm}(\alpha) := (a, b)(a, -b).$$ 
		\item[] (iv) $a~|~b$ denotes that $a$ divides $b$.
	\end{itemize}
	
	\section{Certain Elements of $\mathbb{Z}[\sqrt{d}]$}
	
	In this section, we will see how the elements  of norms $1$ and $-6$ in  $\mathbb{Z}[\sqrt{d}]$ look like. 
	
	\begin{lem} \label{norm_1}
		Let $d$ be as defined in Notations of $\S$ \ref{pre}. Then the Pell's equations $x^2 - dy^2 = \pm 2$ are not  solvable in rational integers $x$ and $y$.
	\end{lem}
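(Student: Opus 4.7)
The plan is to derive local obstructions modulo two small primes. Since $d \equiv 15 \pmod{60}$, we have $3 \mid d$ and $5 \mid d$, so that $x^{2} - d y^{2} \equiv x^{2} \pmod{3}$ and $x^{2} - d y^{2} \equiv x^{2} \pmod{5}$ for all integers $x, y$. The strategy is to rule out each of the two target values $\pm 2$ by choosing the right modulus.

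First I would handle $x^{2} - d y^{2} = 2$: reduce modulo $3$. A solution would force $x^{2} \equiv 2 \pmod{3}$, but the quadratic residues modulo $3$ are only $\{0, 1\}$, so no such $x$ exists. Next I would handle $x^{2} - d y^{2} = -2$: reduce modulo $5$. A solution would force $x^{2} \equiv -2 \equiv 3 \pmod{5}$, but the quadratic residues modulo $5$ are only $\{0, 1, 4\}$, so again no such $x$ exists.

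Combining the two non-solvability statements gives the lemma. There is no real obstacle: the entire argument is elementary congruence book-keeping, made possible by the fact that the condition $d \equiv 15 \pmod{60}$ was built precisely so that both $3$ and $5$ divide $d$ (the other divisibility consequences, namely $d \equiv 3 \pmod 4$ and $d$ being odd, are used elsewhere in the paper but are not needed here). Note also that the hypothesis that $x^{2} - d y^{2} = -6$ is solvable plays no role in this particular lemma; it will only be used later when constructing the actual Diophantine quadruples.
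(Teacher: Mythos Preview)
Your argument is correct and follows the same idea as the paper: a local obstruction using that $5\mid d$ (and in your version also $3\mid d$). The only cosmetic difference is that the paper dispatches both signs at once by reducing modulo $5$, since neither $2$ nor $-2\equiv 3$ is a quadratic residue modulo $5$; your split into ``mod $3$ for $+2$'' and ``mod $5$ for $-2$'' is equally valid but slightly less economical.
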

	
	\begin{proof}
		We have $d \equiv 15 \pmod{60}$. Suppose that  the equations 
		$
		x^2 - dy^2 = \pm 2
		$
		are solvable in $\mathbb{Z}$.  
		Then 
		$$
		x^2 \equiv \pm 2 \pmod5,
		$$
		which is not possible. This completes the proof.
	\end{proof}
	
	The next lemma describes the elements of norm $-6$.
	
	\begin{lem} \label{norm_6}
		Let \eqref{eq_norm_-6} be solvable in rational integers. The following statements are true:\\
		$(i)$ All solutions $(x, y)$ of this equation are of the form   $(6\alpha \pm 3, 6\beta \pm 1)$ for some $\alpha, \beta \in \mathbb{Z}$ and there are infinitely many such $(x, y)$. 	\\
		$(ii)$ Moreover, among these $\alpha$ and $\beta$, there exist infinitely many $\alpha$ and $\beta$ such that $\alpha + \beta$ is an even and an odd integer.
	\end{lem}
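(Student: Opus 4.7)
The plan is to split the lemma into three tasks: (a) pin down the residue classes of $x$ and $y$ modulo $6$, (b) produce infinitely many solutions using Pell machinery, and (c) use this abundance to extract the parity statement in (ii).

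For (a), the hypothesis $d\equiv 15\pmod{60}$ forces $d\equiv 3\pmod 4$, $d$ odd, and $3\mid d$ with $3\nmid (d/3)$. First I would reduce $x^2 - dy^2 = -6$ modulo $4$: since $d\equiv 3\pmod 4$, the equation becomes $x^2 + y^2 \equiv 2\pmod 4$, which forces both $x$ and $y$ to be odd. Next, working modulo $3$, the term $dy^2$ vanishes, so $x^2 \equiv 0\pmod 3$ and thus $3\mid x$. Writing $d = 3e$ with $\gcd(e,3)=1$ and $x = 3x_1$, the equation reduces to $ey^2 = 3x_1^2 + 2$, which modulo $3$ reads $ey^2 \equiv 2\pmod 3$, hence $3\nmid y$. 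Combining the two congruences via CRT, $x$ is an odd multiple of $3$, giving $x\equiv \pm 3\pmod 6$, and $y$ is odd and coprime to $3$, giving $y\equiv \pm 1\pmod 6$. This yields precisely $(x,y)=(6\alpha\pm 3,\ 6\beta\pm 1)$.

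For (b), the hypothesis supplies one solution $(x_0,y_0)$. Since $d>0$ is square-free and in particular not a perfect square, the Pell equation $u^2 - dv^2 = 1$ has infinitely many integer solutions $(u,v)$. Multiplying in $\mathbb{Z}[\sqrt{d}]$, the element $(x_0+y_0\sqrt{d})(u+v\sqrt{d})=(x_0u+dy_0v)+(x_0v+y_0u)\sqrt{d}$ has norm $-6$, giving a new solution. Comparing absolute values under the non-trivial automorphism of $\mathbb{Q}(\sqrt{d})$ shows that distinct $(u,v)$ produce distinct solutions of $x^2 - dy^2 = -6$, so one obtains infinitely many.

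For (c), observe that the representation $(x,y)=(6\alpha\pm 3,\ 6\beta\pm 1)$ is not rigid: flipping the sign in $x=6\alpha+3$ to $x=6(\alpha+1)-3$ changes the parity of $\alpha$, and similarly for $\beta$. Consequently, for any single solution $(x,y)$ and any prescribed parity of $\alpha+\beta$, one may choose the signs accordingly. Combined with (b), this immediately produces infinitely many $(\alpha,\beta)$ with $\alpha+\beta$ even and infinitely many with $\alpha+\beta$ odd. The only mildly delicate point is to be careful with the simultaneous $\bmod\,4$ and $\bmod\,3$ bookkeeping in (a); the rest is standard Pell theory together with a sign-change argument.
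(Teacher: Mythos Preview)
Your proof is correct and follows essentially the same approach as the paper: congruence arguments modulo small primes for the shape of $(x,y)$, multiplication by Pell units for infinitude, and a sign-flip to control the parity of $\alpha+\beta$. The only cosmetic differences are that you work modulo $4$ where the paper works modulo $2$ (your version is actually cleaner, since modulo $2$ alone only gives $x\equiv y$), and in (ii) you change parity by rewriting $6\alpha+3=6(\alpha+1)-3$ for a fixed $x$, whereas the paper passes from $x$ to $-x$; both arguments are equivalent.
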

	
	\begin{proof}
		(i) Consider \eqref{eq_norm_-6} and reduce it  modulo 2. This gives that
		\begin{equation}\label{eq_mod_2}
			x, y \equiv 1 \pmod2.
		\end{equation}
		Now, reduce  \eqref{eq_norm_-6}  at modulo 3. We obtain $3|x$. Since $3 \nmid y$, otherwise $9|6$. Thus, 
		\begin{equation}\label{eq_mod_3}
			x \equiv 0 \pmod3 \text{~~and~~} y \equiv \pm 1 \pmod3. 
		\end{equation} 
		Combining \eqref{eq_mod_2} and \eqref{eq_mod_3}, this implies that  $x = 6\alpha \pm 3$ and $y = 6\beta \pm 1$ for some $\alpha, \beta \in \mathbb{Z}$. 
		
		Consider $\gamma\delta$, where $\text{Nm}(\gamma) = -6$, $\text{Nm}(\delta) = 1$. Due to the existence of infinitely many $\delta$'s, there are infinitely many elements of norm $-6$.

		\noindent
		(ii) Let $(x,y)$ be a solution of \eqref{eq_norm_-6}. Then $(\pm x, \pm y)$ will also be  solutions of \eqref{eq_norm_-6}.  Suppose that $\alpha$ is an even (odd) number such that $x = 6\alpha + 3$. Then for the case of  $6\alpha' + 3 = -x$, $\alpha'$ will be odd (even). So for any $\beta$, we can choose $\alpha$ such that  $\alpha + \beta$ is an even and odd integer. Since there exist infinitely many solutions of \eqref{eq_norm_-6},  there are infinitely many $\alpha$ and $\beta$ such that $\alpha + \beta$ can be an even and odd integer.
	\end{proof}
	Utilizing Lemma \ref{norm_6}, we can deduce the following corollary:
	\begin{cor}\label{d}
		Let $d$ be as defined in Notations of $\S$ \ref{pre}. Then
		$d \equiv 15 \pmod{360}$.
	\end{cor}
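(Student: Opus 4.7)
The plan is to sharpen the congruence $d \equiv 15 \pmod{60}$ by extracting two extra bits of information: a stronger condition modulo $8$ and a stronger condition modulo $9$. Since $360 = 8 \cdot 9 \cdot 5$ and the hypothesis already gives $d \equiv 0 \pmod 5$, it suffices by the Chinese Remainder Theorem to prove $d \equiv 7 \pmod 8$ and $d \equiv 6 \pmod 9$; these together with $d \equiv 0 \pmod 5$ pin down $d \equiv 15 \pmod{360}$.

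To upgrade from mod $4$ to mod $8$, I would use part (i) of Lemma \ref{norm_6}: any solution $(x,y)$ of $x^2 - dy^2 = -6$ has $x = 6\alpha \pm 3$ and $y = 6\beta \pm 1$, so both $x$ and $y$ are odd. Consequently $x^2 \equiv y^2 \equiv 1 \pmod 8$, and reducing \eqref{eq_norm_-6} modulo $8$ gives $1 - d \equiv -6 \pmod 8$, hence $d \equiv 7 \pmod 8$.

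To upgrade from mod $3$ to mod $9$, I would again use the shape of the solutions from Lemma \ref{norm_6}. Since $x = 3(2\alpha \pm 1)$ we have $x^2 \equiv 0 \pmod 9$, so reducing \eqref{eq_norm_-6} modulo $9$ yields $dy^2 \equiv 6 \pmod 9$. Writing $d = 3e$ (which is legal because $d \equiv 0 \pmod 3$) and using $y^2 \equiv 1 \pmod 3$ (because $3 \nmid y$), the congruence becomes $3ey^2 \equiv 6 \pmod 9$, i.e. $ey^2 \equiv 2 \pmod 3$, forcing $e \equiv 2 \pmod 3$ and therefore $d = 3e \equiv 6 \pmod 9$.

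Finally I would combine $d \equiv 7 \pmod 8$, $d \equiv 6 \pmod 9$, and $d \equiv 0 \pmod 5$ (which is part of $d \equiv 15 \pmod{60}$) via CRT to conclude $d \equiv 15 \pmod{360}$. There is no real obstacle here; the only mild subtlety is being careful that the parametric description in Lemma \ref{norm_6} applies uniformly to every integer solution, which it does, so the three congruences above follow from any single solution of \eqref{eq_norm_-6} and no case analysis on the signs $\pm 3,\pm 1$ is needed.
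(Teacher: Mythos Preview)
Your proposal is correct and follows essentially the same approach as the paper. The only cosmetic difference is that the paper reduces $(6\alpha \pm 3)^2 - d(6\beta \pm 1)^2 = -6$ directly modulo $72$ (using $3 \mid d$) to obtain $d \equiv 15 \pmod{72}$, and then combines this with the hypothesis $d \equiv 15 \pmod{60}$ via $\operatorname{lcm}(60,72)=360$, whereas you split the computation into the separate moduli $8$, $9$, and $5$ before recombining with the Chinese Remainder Theorem.
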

	
	\begin{proof}
		From Lemma \ref{norm_6}, if \eqref{eq_norm_-6} is solvable in $\mathbb{Z}$, then $(x, y)$ will be of the form $(6\alpha \pm 3, 6\beta \pm 1)$ for some $\alpha, \beta \in \mathbb{Z}$, i.e.
		$$
		(6\alpha \pm 3)^2 - d(6\beta \pm 1)^2 = -6.
		$$
		Reduce this equation at modulo 72. Using $3|d$, $d \equiv 15 \pmod{72}$. Since $d \equiv 15 \pmod{60}$ and $lcm(60, 72) = 360$, we get that $d \equiv 15 \pmod{360}$, which is the desired result.
	\end{proof}
	
	The following lemma describes the form of  elements of norm $1$ in  $\mathbb{Z}[\sqrt{d}]$.

	\begin{lem} \label{lem_norm_1} 
		Suppose that $d$ is as defined in Notations of $\S$ \ref{pre}. The following statements are true:\\
		$(i)$ There exists a solution  $(\gamma', \delta')$ of  the  equation 
		\begin{equation}\label{eq_1}
			x^2 - dy^2 = 1 
		\end{equation}
		such that  $\delta'$ is an odd integer and $\gamma'$ has the form $6\alpha \pm 4$, where $\alpha \in \mathbb{Z}$. \\
		$(ii)$  A fundamental solution  of \eqref{eq_1} has the form either   $(6\alpha \pm 4, 6\beta \pm 1)$ or $(6\alpha \pm 4, 6\beta + 3)$, where $\alpha, \beta \in \mathbb{Z}$.
	\end{lem}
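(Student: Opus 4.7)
For part (i), the plan is to construct the required norm-$1$ element explicitly from a norm-$(-6)$ element. Starting with a solution $(x_0, y_0)$ of \eqref{eq_norm_-6} (which exists by hypothesis), I would square in $\mathbb{Z}[\sqrt{d}]$ to obtain
\[
(x_0 + y_0\sqrt{d})^2 = (x_0^2 + dy_0^2) + 2x_0 y_0 \sqrt{d},
\]
an element of norm $36$, and then divide each component by $6$. The first task is verifying that $6$ divides both components. By Lemma \ref{norm_6}, $x_0$ and $y_0$ are odd, $3 \mid x_0$, and $3 \nmid y_0$; since $d$ is also odd and divisible by $3$, one sees immediately that $6 \mid 2x_0 y_0$, and that $x_0^2 + dy_0^2$ is both even (odd $+$ odd) and divisible by $3$ (since $3 \mid x_0^2$ and $3 \mid d$). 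Setting
\[
(\gamma', \delta') := \Bigl( \tfrac{x_0^2 + dy_0^2}{6},\ \tfrac{x_0 y_0}{3} \Bigr),
\]
the identity $(\gamma')^2 - d(\delta')^2 = (x_0^2 - dy_0^2)^2 / 36 = 1$ is then automatic.

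The second step is to verify the shape conditions on $(\gamma', \delta')$. Writing $x_0 = 3(2\alpha \pm 1)$ from Lemma \ref{norm_6}, one has $\delta' = (2\alpha \pm 1) y_0$, a product of two odd integers, hence odd. For $\gamma'$, the equation $(\gamma')^2 = 1 + d(\delta')^2$ reduced modulo $4$ (using $d \equiv 3 \pmod 4$) forces $\gamma'$ to be even, while reduction modulo $3$ (using $3 \mid d$) gives $(\gamma')^2 \equiv 1 \pmod 3$ and hence $3 \nmid \gamma'$. By the Chinese remainder theorem these two congruences pin down $\gamma' \equiv \pm 4 \pmod 6$, i.e.\ $\gamma' = 6\alpha' \pm 4$.

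For part (ii), let $(\gamma_1, \delta_1)$ denote the fundamental solution of \eqref{eq_1}. The key claim is that $\delta_1$ is odd. The plan is a proof by contradiction: if $\delta_1$ were even, then $\gamma_1^2 = 1 + d\delta_1^2$ would be odd, forcing $\gamma_1$ odd, and a straightforward induction on the recursion $\gamma_{n+1} + \delta_{n+1}\sqrt{d} = (\gamma_1 + \delta_1\sqrt{d})(\gamma_n + \delta_n\sqrt{d})$ would then yield that every $\delta_n$ is even. But the solution $(\gamma', \delta')$ from part (i) must coincide with some $(\gamma_n, \delta_n)$ up to sign, and $\delta'$ is odd — contradiction. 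Once $\delta_1$ is known to be odd, its three possible residues $\{1, 3, 5\}$ modulo $6$ give exactly the advertised shapes $6\beta \pm 1$ and $6\beta + 3$, and repeating the mod-$4$/mod-$3$ analysis from part (i) forces $\gamma_1 = 6\alpha \pm 4$.

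The main obstacle is the ``descent'' step in part (i), namely the simultaneous divisibility of $x_0^2 + dy_0^2$ and $2x_0 y_0$ by $6$ that lets us pass from norm $36$ back to norm $1$ inside $\mathbb{Z}[\sqrt{d}]$. This is not formal: it relies squarely on the modular structure of norm-$(-6)$ solutions established in Lemma \ref{norm_6} together with $3 \mid d$ provided by Corollary \ref{d}. Everything else — the $\gamma'$, $\gamma_1$ residue computations and the parity induction in (ii) — is routine bookkeeping.
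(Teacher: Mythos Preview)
Your proof is correct and follows the same strategy as the paper: your $(\gamma',\delta') = \bigl((x_0^2 + dy_0^2)/6,\ x_0 y_0/3\bigr)$ coincides with the paper's $\bigl((\gamma^2+3)/3,\ \gamma\delta/3\bigr)$ once one substitutes $dy_0^2 = x_0^2 + 6$, and part (ii) proceeds by the same parity-contradiction against the fundamental solution. One minor correction: the divisibility $3 \mid d$ that you flag as the crux is immediate from the hypothesis $d \equiv 15 \pmod{60}$, not from Corollary~\ref{d}.
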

	
	\begin{proof}
		(i) Let $(\gamma, \delta)$ be a solution of \eqref{eq_norm_-6}. Take $\gamma' = (\gamma^2 + 3)/3$ and $\delta' = (\gamma \delta)/3$.  From Lemma \ref{norm_6}(i), $3|\gamma$. This gives that $\gamma', \delta' \in \mathbb{Z}$. Consider the expression
		\begin{align*}
			\gamma'^2 - d\delta'^2 = \dfrac{1}{9}[(\gamma^2 + 3)^2 - d(\gamma \delta)^2] &= \dfrac{1}{9}[\gamma^2(\gamma^2 - d\delta^2) + 6\gamma^2 + 9] \\
			&= \dfrac{1}{9}[-6(\gamma^2) + 6\gamma^2 + 9]\text{\hspace{0.2cm} (by using \eqref{eq_norm_-6})}\\
			&= 1.
		\end{align*}
		Thus, $(\gamma', \delta')$ is an integral  solution of \eqref{eq_1}. From Lemma \ref{norm_6}(i), $\gamma$ is an odd integer and $3|\gamma$, so $\gamma'$ will be of the form $6\alpha \pm 4$, where $\alpha \in \mathbb{Z}.$ Indeed, 
		if $(x, y)$ is a solution of \eqref{eq_1}, then $(\pm x, \pm y)$ are also its solutions.
		
		Since both $\gamma$ and $\delta$ are odd by using Lemma \ref{norm_6}, so $\delta'$ is an odd integer.
		
		\noindent
		(ii) It is well known that there exists a solution  $s = (\gamma_1', \delta_1')$ of \eqref{eq_1} such that all the solutions of  \eqref{eq_1} have the form $\pm s^n$ (see \cite[Theorem 3.20]{MOL2011}), where $n \in \mathbb{Z}$. We call that $s$ is a \textit{fundamental solution} of \eqref{eq_1}. Suppose that $\gamma_1'$ is odd and $\delta_1'$ is even. Then for every $n$, we get that $\pm s^n = (t_1, t_2)$ with odd $t_1$ and even $t_2$. But, from the first part of Lemma  \ref{lem_norm_1}, we get a contradiction. Therefore, 
		\begin{align}\label{gam}
			\gamma_1' \equiv 0 \pmod2,~~~~~~ \delta_1' \equiv 1 \pmod2.
		\end{align}		
		
		Consider \eqref{eq_1} at modulo 3. This gives that 
		$
		\gamma_1' \equiv \pm 1 \pmod{3}$ and $\delta_1' \equiv 0, \pm 1 \pmod{3}.
		$
		Hence,
		by using \eqref{gam}, $s$ has the form of either  $(6\alpha \pm 4, 6\beta \pm 1)$ or $(6\alpha \pm 4, 6\beta + 3)$, due to $6\beta - 3 = 6(\beta - 1) + 3$. This completes the proof.
	\end{proof}
	The following two examples support the fact that these two types of fundamental solutions of \eqref{eq_1} given in Lemma  \ref{lem_norm_1}(ii) exist.
	
	\subsection*{Example 1: }Let $d = 15$. Then,  fundamental solutions for \eqref{eq_1} and \eqref{eq_norm_-6} are $(4, 1)$ and $(3, 1)$, respectively. Here, a fundamental solution of \eqref{eq_1} is of the form $(6\alpha + 4, 6\beta + 1)$.
	
	\subsection*{Example 2: }Let $d = 735$. Then,  fundamental solutions for \eqref{eq_1} and \eqref{eq_norm_-6} are $(244, 9)$ and $(27, 1)$, respectively. Here, a fundamental solution of \eqref{eq_1} is of the form $(6\alpha + 4, 6\beta + 3)$.

	\section{Proof of Theorem \ref{main_result_2}}
	Let $n = 2(2m +1 , 2k)$. By using Lemma \ref{norm_6}(i), $-6$ can be written as 
	\begin{align}\label{fac-6}
		-6 = (6\alpha + 3, -6\beta - 1)(6\alpha + 3, 6\beta + 1)  \text{~~~or~~~} (6\alpha + 3, -6\beta + 1)(6\alpha + 3, 6\beta - 1)
	\end{align}
	for some $\alpha, \beta \in \mathbb{Z}$. Consider the first type of factorization of $-6$ which is given in the above equation. This gives that  $3n$ can be factorized in the following way:
	\begin{align*}
		3n &= (-1)(-6)(2m + 1, 2k)\\
		&=  (-6\alpha - 3, 6\beta + 1)(6\alpha + 3, 6\beta + 1)(2m + 1, 2k)
		\\
		&= \Big(-6\alpha - 3, 6\beta + 1\Big)\Big(12\alpha m + 6\alpha + 6m + 3 + d(12\beta k + 2k),~
	 12 \alpha k + 6k + 12\beta m+ 6\beta + 2m +1\Big).
	\end{align*}	
	Suppose that the first factor on the right hand side of the above expression is $\alpha_1$ and the second one is $\alpha_2$. Then, from Lemma \ref{lem2.1}
	$$
	\alpha_1 + \alpha_2 = 2a + 4r = \Big(12\alpha m + 6m + d(12\beta k + 2k), 12 \alpha k + 6k + 12\beta m+ 12\beta + 2m +2\Big). 
	$$
	By dividing by 2,
	$$
	a + 2r = \Big(6\alpha m + 3m + d(6\beta k + k), 6 \alpha k + 3k + 6\beta m+ 6\beta + m +1\Big). 
	$$
	From Lemma \ref{lem_norm_1}(ii), we choose $a = (2a_1, 2b_1 + 1)$ with its norm being 1, where $a_1, b_1 \in \mathbb{Z}$. Then the above equation gives that
	$$
	r = \Big(3\alpha m + 3d\beta k) - a_1 + (3m + dk)/2, 3 \alpha k + 3\beta m + (m + 3k)/2\Big).
	$$
	By hypothesis, $m + k$ is an even integer. So, $r \in \mathbb{Z}[\sqrt{d}]$. Now, we calculate $b$ using the formula 
	$
	b = \dfrac{r^2 - n}{a},
	$
	from Lemma \ref{lem2.1}.
	Since  $a$ is an unit in $\mathbb{Z}[\sqrt{d}]$, $b \in \mathbb{Z}[\sqrt{d}]$. Till now, we have $a, b, r \in \mathbb{Z}[\sqrt{d}]$. Using Lemma  \ref{lemm1}, only for at most finitely many $a$'s, the set $\mathcal{A}$ (given in Lemma \ref{lem2.1}) contains a zero or two equal elements. But we have infinitely many choices of $a$.
	Hence by utilizing Lemma \ref{lem2.1}, there exist infinitely many Diophantine quadruples $\mathcal{A}$ in $\mathbb{Z}[\sqrt{d}]$ with the property $D(n)$.
	
	Analogously, we may choose the second type of factorization of $-6$, which is given in \eqref{fac-6}, to get  Diophantine quadruples in $\mathbb{Z}[\sqrt{d}]$ with the property $D(n)$. 
	
	This completes the proof. \qed
	
	\section{Counter-examples of Conjecture \ref{JadZ2017}}\label{ce}
	Let $d$ be as defined in Notations of \textsection \ref{pre}. Throughout this section, we assume that $n = (4m + 2, 4k) = 2(2m + 1, 2k)$ satisfying
	\begin{equation}\label{eq_(2m)}
		(2m + 1)^2 - d(2k)^2 = 1,
	\end{equation}
	where $m, k \in \mathbb{Z}$. Let $(\gamma, \delta)$ be a fundamental solution of \eqref{eq_1}, and let $t \in \mathbb{Z}$. 
	Using Lemma \ref{lem_norm_1}(ii), $\gamma$ is an even integer and $\delta$ is an odd integer. 
	Now taking $(\gamma, \delta)^{2t} = (\gamma'', \delta'')$, we get that for every $t$, $\gamma''$ is odd and $\delta''$ is even.
	Then corresponding to these $\gamma''$  and $ \delta''$, there exist $m$ and $k$, respectively,  which satisfy \eqref{eq_(2m)}.
	Therefore, $n$ $ = 2(\gamma, \delta)^{2t}$. 
%
%
	
	\begin{lem}\label{diff}
		Let $n$ be as defined at the beginning of \textsection \ref{ce}, i.e., $n = 2(2m + 1, 2k)$ satisfying \eqref{eq_(2m)}, where $m, k \in \mathbb{Z}.$ Suppose that $d$ is as given in Notations of \textsection \ref{pre}.
		Then  $n$ cannot be written as a difference of two squares in $\mathbb{Z}[\sqrt{d}]$.
	\end{lem}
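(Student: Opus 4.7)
The plan is to argue by contradiction. Suppose $n = \alpha^{2} - \beta^{2}$ for some $\alpha, \beta \in \mathbb{Z}[\sqrt{d}]$. Factor this as $n = \gamma \delta$, where $\gamma = \alpha + \beta$ and $\delta = \alpha - \beta$. Since $\gamma - \delta = 2\beta$, the two factors satisfy $\gamma \equiv \delta \pmod{2}$ in $\mathbb{Z}[\sqrt{d}]$. The strategy is to combine this parity obligation with a norm restriction to rule out every possible factorization.

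Next I would compute the norm of $n$. Writing $n = 2(2m+1, 2k)$ and using the standing hypothesis \eqref{eq_(2m)}, one gets
\begin{equation*}
\mathrm{Nm}(n) \;=\; (4m+2)^{2} - d(4k)^{2} \;=\; 4\bigl[(2m+1)^{2} - d(2k)^{2}\bigr] \;=\; 4.
\end{equation*}
Therefore $\mathrm{Nm}(\gamma)\,\mathrm{Nm}(\delta) = 4$, and $|\mathrm{Nm}(\gamma)| \in \{1,2,4\}$. Lemma \ref{norm_1} rules out the value $2$ entirely, so one of $\gamma,\delta$ has norm $\pm 1$, i.e.\ is a unit of $\mathbb{Z}[\sqrt{d}]$. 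Without loss of generality assume $\gamma$ is a unit, so $\gamma^{-1} \in \mathbb{Z}[\sqrt{d}]$.

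Now comes the parity contradiction. On the one hand, $\delta = \gamma^{-1} n = 2\gamma^{-1}(2m+1, 2k)$ lies in $2\mathbb{Z}[\sqrt{d}]$, so $\delta \equiv 0 \pmod{2}$; combined with $\gamma \equiv \delta \pmod{2}$ this forces $\gamma \equiv 0 \pmod{2}$. On the other hand, write $\gamma = (c,e)$; the unit equation $c^{2} - d e^{2} = \pm 1$, reduced modulo $2$ using that $d$ is odd (since $d \equiv 15 \pmod{60}$), gives $c + e \equiv 1 \pmod{2}$. Hence $c$ and $e$ have opposite parities, so $\gamma \not\equiv 0 \pmod{2}$, a contradiction. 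This completes the proof.

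No step is really a serious obstacle: the only thing to be careful about is correctly identifying the finite list of candidate factorizations dictated by $\mathrm{Nm}(\gamma)\mathrm{Nm}(\delta) = 4$ and making sure that Lemma \ref{norm_1} eliminates the balanced case, leaving only the unit-times-norm-$4$ case where the $n \in 2\mathbb{Z}[\sqrt{d}]$ divisibility clashes with the necessarily mixed parity of a unit.
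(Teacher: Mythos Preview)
Your argument is correct and considerably shorter than the paper's. Both proofs reach the same bottleneck --- producing an element of $\mathbb{Z}[\sqrt{d}]$ of norm $\pm 2$ and invoking Lemma~\ref{norm_1} --- but they get there by different routes. The paper works component-wise: writing $n=(x_1,y_1)^2-(x_2,y_2)^2$, setting $\alpha=x_1-x_2$, $\beta=y_1-y_2$, solving a $2\times 2$ linear system for $x_2,y_2$, and grinding out divisibility relations to force $\alpha^2-d\beta^2\in\{\pm 1,\pm 2\}$; a separate parity step then excludes $\pm 1$. You instead exploit the multiplicativity of the norm together with the special hypothesis $\mathrm{Nm}(n)=4$, which immediately pins $\{\mathrm{Nm}(\gamma),\mathrm{Nm}(\delta)\}$ to $\{\pm 1,\pm 4\}$ once $\pm 2$ is excluded, and then the congruence $\gamma\equiv\delta\pmod 2$ finishes things in two lines. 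What your approach buys is brevity and conceptual clarity; what the paper's approach buys is that the divisibility machinery (following \cite{DF2007}) is reusable for more general $n$ whose norm is not as small. A minor stylistic point: your symbols $\alpha,\beta,\gamma,\delta$ clash with the paper's usage elsewhere, so if this were to be merged you would want to rename.
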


	\begin{proof}
		We follow \cite[Proposition 5]{DF2007} to prove the lemma. Suppose that $n$ is expressible as a difference of two squares in $\mathbb{Z}[\sqrt{d}]$. Then  for some rational integers $x_1, x_2, y_1$, and $y_2$, we have:
		\begin{equation}\label{sq1}
			n = (4m + 2, 4k) = (x_1, y_1)^2 - (x_2, y_2)^2 = (x_1^2 + dy_1^2 - x_2^2 - dy_2^2, 2x_1y_1 - 2x_2y_2).
		\end{equation} 
		This gives that
		\begin{align}
			x_1^2 + dy_1^2 - x_2^2 - dy_2^2 &= 4m+ 2 \label{4k+2}\\
			x_1y_1 - x_2y_2 &= 2k \label{2k}.  
		\end{align}
		
		First, we show that $y_1$ and $y_2$ have different parity. Suppose not, then $y_1$ and $y_2$ are both even or odd. Therefore, $y_1^2 - y_2^2 \equiv 0 \pmod{4}$. Using it along with \eqref{4k+2}, we get that $x_1^2 - x_2^2 \equiv 2 \pmod{4}$, and this is not possible for any  $x_1$ and $x_2$. Thus,
		\begin{equation}\label{y1+y2}
			y_1 + y_2 \equiv 1 \pmod{2}.
		\end{equation}
		Next, by using the above equation and $d \equiv 3 \pmod{4}$, we get that either $x_1^2 - x_2^2 \equiv 1$ or $3 \pmod{4}$,  from \eqref{4k+2}. So, we find that
		$x_1$ and $x_2$ also have different parity, i.e.,
		\begin{equation}\label{sq2}
			x_1 + x_2 \equiv 1 \pmod{2}.
		\end{equation}	
		
		Let $x_1 = x_2 + \alpha$ and $y_1 = y_2 + \beta$ for some rational integers $\alpha$ and $\beta$. 
		Then by using \eqref{4k+2} and \eqref{2k}, we get the following equations:
		\begin{align}\label{leq}
			\alpha x_2 + d\beta y_2 = 2m +1 - \dfrac{\alpha^2 + d\beta^2}{2} \hspace{0.5cm}\text{and} \hspace{0.5cm}
			\beta x_2 + \alpha y_2 = 2k - \alpha \beta,
		\end{align}
		respectively.
		Now, we use \eqref{y1+y2} and \eqref{sq2} to get that
		\begin{equation}\label{alb}
			\alpha, \beta \equiv 1 \pmod{2}.
		\end{equation}
		The above gives that 
		\begin{equation}\label{in_integer}
			\dfrac{\alpha^2 + d\beta^2}{2} \in \mathbb{Z},
		\end{equation}  
		since $d \equiv 3 \pmod{4}$.	Moreover, using \eqref{alb} and $d \equiv 3 \pmod{4}$, we get that $\alpha^2 - d\beta^2 \not\equiv 0 \pmod{4}$.
		So, 
		\begin{equation}\label{neq_zero}
			\alpha^2 - d\beta^2 \neq 0.
		\end{equation} 
		
		From the above equation, the determinant of the system of linear equations \eqref{leq} in $x_2$ and $y_2$, which is $\alpha^2 - d\beta^2$, is not equal to zero. So, \eqref{leq}  has the unique solution, which is given by:
		\begin{align}
			x_2 ~&=~ \dfrac{\left(\left(2m +1 - \dfrac{\alpha^2 + d\beta^2}{2}\right)\alpha - \left(2k - \alpha \beta\right)d\beta\right)}{(\alpha^2 - d\beta^2)},\label{x_22}
			\\
			y_2 ~&=~ \dfrac{\left(\left(2k - \alpha \beta\right)\alpha - \left(2m +1 - \dfrac{\alpha^2 + d\beta^2}{2}\right) \beta\right)}{(\alpha^2 - d\beta^2)}. \label{y_22}
		\end{align} 	
		Note that $x_2 \in \mathbb{Z}$ and 	$\dfrac{\alpha^2 + d\beta^2}{2} \in \mathbb{Z}$ (from \eqref{in_integer}), \eqref{x_22} gives  that
		\begin{align}
			(\alpha^2 - d\beta^2) ~&\Bigg|~ \Bigg(\left(2m +1 - \dfrac{\alpha^2 + d\beta^2}{2}\right)\alpha - \left(2k - \alpha \beta\right)d\beta\Bigg) \nonumber \\
			~\Rightarrow~	(\alpha^2 - d\beta^2) ~&\Bigg|~ 2 \times \Bigg(\left(2m +1 - \dfrac{\alpha^2 + d\beta^2}{2}\right)\alpha - \left(2k - \alpha \beta\right)d\beta\Bigg)\nonumber\\
			~\Rightarrow~
			(\alpha^2 - d\beta^2) ~&\Bigg|~       \Bigg(2 \times \Big((2m + 1) \alpha - d(2k)\beta \Big) - \alpha \Big(\alpha^2 - d\beta^2 \Big)\Bigg) \hspace{0.5cm} \text{(by rearranging the terms)} \nonumber\\
			~\Rightarrow~	(\alpha^2 - d\beta^2) ~&\Big|~ 2 \times \Big((2m + 1) \alpha - d(2k)\beta \Big) \label{er1}. 
		\end{align} 
		Similarly, due to $y_2 \in \mathbb{Z}$ and $\dfrac{\alpha^2 + d\beta^2}{2} \in \mathbb{Z}$ (from \eqref{in_integer}), \eqref{y_22} implies that
		\begin{align}
			(\alpha^2 - d\beta^2) ~&\Bigg|~  \Bigg(\left(2k - \alpha \beta\right)\alpha - \left(2m +1 - \dfrac{\alpha^2 + d\beta^2}{2}\right) \beta\Bigg)
			\nonumber \\
			~\Rightarrow~ 	(\alpha^2 - d\beta^2) ~&\Bigg|~ 2 \times \Bigg(\left(2k - \alpha \beta\right)\alpha - \left(2m +1 - \dfrac{\alpha^2 + d\beta^2}{2}\right) \beta\Bigg) \nonumber \\
			~\Rightarrow~ 	(\alpha^2 - d\beta^2) ~&\Bigg|~ \Bigg(2 \times \Big((2k)\alpha - (2m + 1)\beta \Big) - \beta 
			\Big(\alpha^2 - d\beta^2\Big)\Bigg) \hspace{0.5cm} \text{(by rearranging the terms)} \nonumber \\
			~\Rightarrow~ (\alpha^2 - d\beta^2) ~&\Big|~ 2 \times \Big((2k)\alpha - (2m + 1)\beta \Big). \label{er2}
		\end{align} 
		
		Now, we prove the following two claims:
		\\	
		\noindent
		\textbf{Claim 1:} $(\alpha^2 - d\beta^2) ~|~ 2 \alpha$.
		
		Equations	\eqref{er1} and \eqref{er2} imply that  
		\begin{align*}
			(\alpha^2 - d\beta^2) &~\Bigg|~ \Bigg(2m + 1\Bigg) \times   \Bigg(2 \times \Big((2m + 1) \alpha - d(2k)\beta \Big)\Bigg) \hspace{0.5cm} \text{~and~}\\			 
			(\alpha^2 - d\beta^2) &~\Bigg|~ \Bigg(2kd\Bigg)  \times \Bigg(2  \times \Big((2k)\alpha - (2m + 1)\beta \Big)\Bigg), 
		\end{align*}
		respectively.  Furthermore, the above implies  that 
		\begin{align*}
			(\alpha^2 - d\beta^2) ~&\Bigg|~  \Bigg(\Big(2m + 1 \Big)  \Big(2 \times \Big((2m + 1) \alpha - d(2k) \beta \Big)\Big) -  \Big(2kd \Big)\Big(2 \times \Big((2k)\alpha - (2m + 1)\beta \Big)\Big)\Bigg).
		\end{align*} 
		On solving the right-hand side of the above equation, we get that 	
		\begin{align}\label{mi1}
				(\alpha^2 - d\beta^2) ~\Big|~ 2 \times \Big((2m + 1)^2 - d (2k)^2\Big) \times \alpha.
		\end{align}
		Now $ (2m + 1)^2 - d(2k)^2 = 1$ because of our hypothesis $n = 2 (2m + 1, 2k)$ satisfying \eqref{eq_(2m)}. So, \eqref{mi1} gives that
		\begin{align}\label{a1}
			(\alpha^2 - d\beta^2) ~&|~ 2\times 1 \times \alpha ~~\Rightarrow~~ 	(\alpha^2 - d\beta^2) ~|~ 2 \alpha.
		\end{align}
		This proves the claim.
		
		\noindent
		\textbf{Claim 2: } $(\alpha^2 - d\beta^2) ~|~ 2\beta $.
		
		As we have proved Claim 1, we now proceed to prove this claim.	Equations \eqref{er1} and \eqref{er2} imply
		\begin{align*}
			(\alpha^2 - d\beta^2) ~&\Bigg|~ \Bigg(2k\Bigg) \times \Bigg(2 \times \Big((2m + 1) \alpha - d(2k)\beta \Big)\Bigg), \hspace{0.5cm} \text{and} \\
			(\alpha^2 - d\beta^2) ~&\Bigg|~ \Bigg(2m + 1\Bigg) \times \Bigg(2 \times \Big((2k)\alpha - (2m + 1)\beta \Big)\Bigg), 
		\end{align*}
		respectively. Furthermore, the above implies that
		\begin{align}\label{mi3}
			(\alpha^2 - d\beta^2) ~&\Bigg|~ \Bigg(\Big(2k\Big)\Big(2 \times \Big((2m + 1) \alpha - d(2k)\beta \Big)\Big) - \Big(2m + 1\Big)\Big(2 \times \Big((2k)\alpha - (2m + 1)\beta \Big)\Big)\Bigg).
		\end{align}
		On solving the right-hand side of \eqref{mi3}, we get that
		\begin{align}\label{mi2}
			(\alpha^2 - d\beta^2) ~\Big|~ 2 \times \Big((2m + 1)^2 - d (2k)^2\Big) \times \beta.
		\end{align}
		Now  $ (2m + 1)^2 - d(2k)^2 = 1$ because of our  hypothesis $n = 2 (2m + 1, 2k)$ satisfying \eqref{eq_(2m)}. So, \eqref{mi2} gives that
		\begin{align}
			(\alpha^2 - d\beta^2) ~|~ 2 \times 1 \times \beta ~\Rightarrow~ (\alpha^2 - d\beta^2) ~|~ 2 \beta  \label{a2},
		\end{align}
		which proves the claim.
		
		Now, let $q = \gcd(\alpha, \beta)$. Then there exist $x, y \in \mathbb{Z}$ such that $q = \alpha x + \beta y$. Using \eqref{a1} and \eqref{a2}, this gives that  
		\begin{equation}\label{sqq1}
			(\alpha^2 - d\beta^2) ~|~ 2q.
		\end{equation}
		Since $q^2 ~|~ (\alpha^2 - d\beta^2)$, \eqref{sqq1} implies that 
		\begin{equation}\label{q211}
			q^2 ~|~ 2q. 
		\end{equation}		  
		From \eqref{alb}, $q$ is an odd integer. Then, \eqref{q211} gives $q = 1$.  Thus, $\alpha^2 - d\beta^2 = \pm 1, \pm 2$, from \eqref{sqq1}.
		By using \eqref{alb}, $\alpha^2 - d\beta^2 = \pm 1$ are not possible. Therefore, $\alpha^2 - d\beta^2 = \pm 2$. 
		However, Lemma \ref{norm_1} says that there do not exist any $\alpha$ and $\beta$ which satisfy the last equation. Hence, our supposition is wrong and this completes the proof.		
	\end{proof}
%
	Taking $m = k = 0$ in Theorem \ref{main_result_2}, we get that there exists a Diophantine quadruple in $\mathbb{Z}[\sqrt{d}]$ with the property $D(2)$. After that, we use Lemma \ref{lemp} to achieve that there exists a Diophantine quadruple in $\mathbb{Z}[\sqrt{d}]$	with the property $D(2(\gamma, \delta)^{2t}).$ On the other hand, from Lemma \ref{diff}, $n$  cannot be written as a difference of two squares in $\mathbb{Z}[\sqrt{d}]$. To conclude, Conjecture \ref{JadZ2017} does not hold for such rings $\mathbb{Z}[\sqrt{d}]$.

	Now, we prove that there are infinitely many such rings   $\mathbb{Z}[\sqrt{d}]$.  
	Take $x = 60\alpha + 3$, $y =1$, and 
	\begin{equation}\label{eq_d}
		d = 360(10\alpha^2 + \alpha) + 15,
	\end{equation}
	where  $\alpha \in \mathbb{Z}$. Note that  for these values of $x, y,$ and  $d$, we get that 
	$d \equiv 15 \pmod{360}$ and \eqref{eq_norm_-6} is solvable in rational integers for every $\alpha \in \mathbb{Z}$. This gives that all the conditions which are imposed on $d$  are satisfied. Therefore, $d$ can be taken as in \eqref{eq_d}. Hence, there exist  infinitely many rings $\mathbb{Z}[\sqrt{d}]$. 
	\subsection*{Example 3:} Let $d = 15$, and let  $n = 2(4 + \sqrt{15})^{2t}$, where $t \in \mathbb{Z}$. Utilizing Theorem \ref{main_result_2}, for $t = 0$, the set 
	$$
	\mathcal{B} = \{(4 + \sqrt{15}), (8 + 2\sqrt{15}), (8 - \sqrt{15}), (28  -7\sqrt{15})\}
	$$
	is  a Diophantine quadruple in $\mathbb{Z}[\sqrt{15}]$ with the property $D(2).$ Now, we use Lemma \ref{lemp} to get that for every $t$, the set 
	\begin{align*}
		(4 + \sqrt{15})^{t}\mathcal{B} = \{(4 + \sqrt{15})^{t}(4 + \sqrt{15}), (4 + \sqrt{15})^{t}(8 + 2\sqrt{15}), 
		(4 + \sqrt{15})^{t}(8 - \sqrt{15}), \\
		(4 + \sqrt{15})^{t}(28  -7\sqrt{15})\}
	\end{align*}
	is  a Diophantine quadruple in $\mathbb{Z}[\sqrt{15}]$ with the property $D(2(4 + \sqrt{15})^{2t})$. However, from Lemma \ref{diff}, $n$ cannot be written as a difference of two squares in $\mathbb{Z}[\sqrt{15}]$.

	\section*{Acknowledgments}
	
	The author thanks  Prof. Kalyan Chakraborty, Prof. Shanta Laishram, and Dr. Azizul Hoque for their support and fruitful discussions. Additionally, the author appreciates the hospitality provided by Indian Statistical Institute Delhi (ISI Delhi), where the work has been done. 
	The author expresses gratitude to the anonymous referee for their valuable suggestions and remarks, which have significantly enhanced the exposition of this paper.

\end{document}